\documentclass[12pt]{amsart}
\usepackage{geometry}   %????????
\usepackage[colorlinks,citecolor = red, linkcolor=blue,hyperindex]{hyperref}
\usepackage{euscript,eufrak,verbatim, mathrsfs}
\usepackage[psamsfonts]{amssymb}
\usepackage{bbm}
\usepackage{graphicx}
 \usepackage{float}
\usepackage{float, tikz}

 \usepackage[all, cmtip]{xy}

\usepackage{upref, xcolor, dsfont}
\usepackage{amsfonts,amsmath,amstext,amsbsy, amsopn,amsthm}
\usepackage{enumerate}

\usepackage{url}

\usepackage{mathtools}
\usepackage{bookmark}

 \usepackage{euscript}
\usepackage{helvet}         % selects\textbf{\textbf{•}} Helvetica as sans-serif font
\usepackage{courier}        % selects Courier as typewriter font
\usepackage{type1cm}        % activate if the above 3 fonts are
%                            % not available on your system
%%
%\usepackage{makeidx}         % allows index generation
%                             % when including figure files
\usepackage{multicol}        % used for the two-column index
\usepackage[bottom]{footmisc}% places footnotes at page bottom

\newtheorem{theorem}{Theorem}[section]
\newtheorem*{theorem*}{Theorem B} 
\newtheorem{lemma}[theorem]{Lemma}

\newtheorem{proposition}[theorem]{Proposition}

\newtheorem*{definition*}{Definition}
\newtheorem*{remark*}{Remark}

\newtheorem*{observation*}{Observation}

\newtheorem*{assumption*}{Assumption}

%%%%%%%%%%%%%%%%%%%%%%%%%%%%%%5
\theoremstyle{definition}

%%%%%%%%%%%%%%%%%%%%%%%%
\theoremstyle{remark}

\geometry{left=2.8cm,right=2.8cm,top=2.5cm,bottom=2.5cm}

\newcommand{\C}{\mathbb{C}}

\newcommand{\Var}{\mathrm{Var}}

\newcommand{\Conf}{\mathrm{Conf}}

\begin{document}

\title[DPP with sub-Bergman kernels]{Rigidity of determinantal point processes on the unit disc with sub-Bergman kernels}

\author%[authorlabel1]
{Yanqi Qiu}
\address%[authorlabel1]
{Yanqi QIU: Institute of Mathematics and Hua Loo-Keng Key Laboratory of Mathematics, AMSS, Chinese Academy of Sciences, Beijing 100190, China.}
\email{yanqi.qiu@amss.ac.cn; yanqi.qiu@hotmail.com}

\author
{Kai Wang}
\address{Kai WANG: School of Mathematical Sciences, Fudan University, Shanghai, 200433, China.}
\email{kwang@fudan.edu.cn}

\thanks{Y. Qiu is supported by grants NSFC Y7116335K1,  NSFC 11801547 and NSFC 11688101 of National Natural Science Foundation of China. K. Wang is supported by grants NSFC 11722102.}

\begin{abstract}  
We give natural constructions of number rigid determinantal point processes on the unit disc $\mathbb{D}$ with sub-Bergman kernels of the form
\[
K_\Lambda(z, w) = \sum_{n\in \Lambda}(n+1) z^n \bar{w}^n, \quad z, w \in \mathbb{D},
\]
with $\Lambda$ an infinite subset of the set of non-negative integers.  Our constructions are given both in a deterministic method and a probabilistic method. In the deterministic method, our proofs involve  the classical Bloch functions. 
 \end{abstract}

\subjclass[2010]{Primary 60G55; Secondary 30B20, 30H20}
\keywords{determinantal point processes, sub-Bergman kernels, Bloch functions, lacunary sequences}

\maketitle

\section{Introduction}
The present paper is devoted to investigate the existence of some non-trivial and natural {\it number rigid} determinantal point processes over bounded domains in the complex plane.

\subsection{Determinantal point processes}
Let us first recall some notations and concepts of determinantal point processes. Let $M$ be a locally compact and  complete separable metric space equipped with a $\sigma$-finite non-negative measure $\mu$. Denote by $\Conf(M)$ the space of  configurations  over $M$ which consists of  non-negative integer-valued Radon measures on $M$. The topology of vague convergence on the set of Radon measures makes $\Conf(M)$ a Polish space.    We call any Borel probability measure $\mathbb{P}$ on 
 $\Conf(M)$ a   point process on $M$. A point process $\mathbb{P}$ on $M$ is  called determinantal if it admits a reproducing kernel function $K: M\times M \rightarrow \mathbb{C}$  such that 
\begin{eqnarray}
\mathbb{E}_{\mathbb{P}}\Big[\prod_{i=1}^{n}\frac{(\# B_i)!}{(\# B_i-n_i)!}\Big]=\int_{B_1^{n_1}\times \cdots\times B_m^{n_m}} \det\Big[K(x_i,x_j)\Big]_{i,j=1}^n d\mu (x_1)\cdots d\mu (x_n)  
\end{eqnarray} 
for any disjoint bounded Borel sets $B_1,\cdots,B_m$, $ m\geq 1,
 n_i\geq  1,n_1+\cdots+n_m = n.$    Here the counting function $\# B: \Conf(M)\to \mathbb{N} = \{0, 1, 2, \cdots\}$ is defined by $\# B(X)=\int_B d X$ for any $X\in \Conf(M)$. 
 
Determinantal   point processes were introduced by
Macchi in the early seventies to describe random fermion fields in quantum theory \cite{M} and later developed in Soshnikov \cite{S1, S2}, Shairai-Takahashi \cite{ST} and many other authors.  Determinantal   point processes appear in many branches of mathematics, such as  eigenvalues of unitarily invariant random matrices as well as the zeros of random analytic functions on the unit disc \cite{HKPV}.  In most interesting cases, the kernel function $K(x,y)$ yields an integral operator  on $L^2(M, d\mu)$ of locally trace class. In the case that $K$ is a locally trace class operator, a characterization for a point process $\mathbb{P}$ to be determinantal and induced by $K$ is given by the following:  for any pairwise disjoint bounded
Borel sets $B_1,\cdots,B_m$  and any $z_1,\cdots,z_m\in \mathbb{C}$, we have 
\[
\mathbb{E}_\mathbb{P} \Big[\prod_{i=1}^m z_i^{\# B_i } \Big]=\det(Id+\sum_{i=1}^{m}(z_i-1)\chi_{B_i}K\chi_{B_i}).
\]
We refer the reader to   \cite{Bo,HKPV, ST,S1,S2} for further background and details of  determinantal point processes. 

We now recall the definition of the number rigidity of point processes. For a given  Borel subset $C\subset M$, let  ${\mathcal{F}}_C$
  be the $\sigma$-algebra on $\Conf(M)$ generated by all random variables   $\# B$ with all Borel subsets $B\subset C$. For any point process $\mathbb{P}$ on $M$, we denote by $\mathcal{F}_C^{\mathbb{P}}$ the completion of the $\sigma$-algebra ${\mathcal{F}}_C$ with respect to $\mathbb{P}$.  A point process $\mathbb{P}$ on $M$ is called number rigid  if for any bounded Borel set $B\subset M$, the random variable $\# B$ is ${\mathcal{F}}_{M\setminus B}^{\mathbb{P}}$ measurable.  This definition of number rigidity is due to  Ghosh \cite{G} where  he shows that the sine-process is number rigid and Ghosh-Peres \cite{GP} where they show that the Ginibre process and the zero set of Gaussian analytic function on the plane are number rigid. 
  Bufetov\cite{B} shows that determinantal point processes with the Airy, the Bessel and the
  Gamma kernels    are rigid. He indeed  establishes a  general theorem   that rigidity holds for the kernel on the real axis $\mathbb{R}$ with a mild condition of  growth. For more results on the number rigidity of point processes, we refer the reader to \cite{BDQ, BPQ,G-16,  G-JSP, G-survey,  Reda-Naj, OS}.

   However, Holroyd and Soo  showed that  the determinantal point process on the unit
  disc $\mathbb{D}$ with the standard Bergman kernel (with respect to the normalized Lebesgue measure on the unit disc $\mathbb{D}$):
  $$K_{\mathbb{D}}(z,w)=\frac{1}{(1-z \bar{w})^2} = \sum_{n=0}^\infty (n+1) z^n \bar{w}^n $$ 
  is not number rigid  \cite{HS}. See also \cite{BQ} for an alternative proof of this result. More generally,  among many other things, Bufetov, Fan and Qiu \cite{BFQ} showed that for any domain $U$ in the $d$-dimensional complex Euclidean space $\mathbb{C}^d$ without Liouville property (that is, there exists a non-constant bounded holomorphic function  $f: U\rightarrow \C$) and any weight $\omega: U\rightarrow \mathbb{R}^{+}$ locally away from zero, the determinantal point process associated with the reproducing kernel of the weighted Bergman space $L_a^2(U; \omega)$ is not number rigid.  

These negative results lead us to ask  whether there exist natural number rigid determinantal point processes  on a bounded domain of the complex plane (of course, any finite rank orthogonal projection yields a number rigid determinantal point process, so here we are only interested in  infinite rank orthogonal projections).   In this paper, we answer affirmatively  this question with a deterministic and a probabilistic method. It also inspires us to construct a series of examples involving   lacunary series in the Bloch space.

\subsection{Statements of main results}
From now on, we focus on the case of unit disc $\mathbb{D}$ equipped with the normalized Lebesgue measure $dm$. 
We shall consider  determinantal point processes induced by the  orthogonal projection kernels (which we call sub-Bergman kernels) of the form  
\[
K_\Lambda(z,w) = \sum_{n\in \Lambda} (n+1) z^n \bar{w}^n,
\]
where $\Lambda \subset\mathbb{N}= \{0, 1, 2, \cdots\}$ is an infinite subset of $\mathbb{N}$.  Note that $K_\Lambda$ is the orthogonal projection onto the following subspaces of the Bergman space $L_a^2(\mathbb{D}) = L^2(\mathbb{D}) \cap Hol(\mathbb{D})$:
\[
\overline{\mathrm{span}}^{L^2(\mathbb{D})}\Big\{z^n \Big| n \in \Lambda\Big\} \subset L^2_a(\mathbb{D}).
\]

To indicate the idea of our proofs,  in what follows, given any $f=\sum_{n=0}^\infty a_n z^n$, we write   
$$K^f(z,w)=\sum_{n=0}^\infty  {a_n}{(n+1)} z^n \bar{w}^n. $$
In particular, for a subset $\Lambda \subset \mathbb{N}$, we denote 
\begin{align}\label{def-f-lambda}
f_\Lambda(z)=\sum_{n\in \Lambda } z^n.
\end{align}

Recall the definition of  Bloch space on the unit disc
$$\mathcal{B}:=\Big\{f\in Hol(\mathbb{D})\Big|  \| f\|_\mathcal{B}: = \sup_{z\in \mathbb{D}} (1-|z|) |f'(z)|<\infty\Big\}.$$ 

\begin{theorem}\label{main-thm-bloch}
Let $\Lambda \subset \mathbb{N}$ be an infinite subset. Suppose that the function $f_\Lambda$ defined in \eqref{def-f-lambda} satisfies $f_\Lambda \in \mathcal{B}$. Then the determinantal point process on $\mathbb{D}$ induced by the kernel $K_\Lambda(z,w) = K^{f_\Lambda}(z, w) $ is number rigid.
\end{theorem}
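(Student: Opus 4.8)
The plan is to use the standard criterion for number rigidity of determinantal point processes going back to Ghosh and Ghosh–Peres, as refined by Bufetov: it suffices to produce, for each bounded Borel set $B \subset \mathbb{D}$, a sequence of functions $\varphi_N \in L^2(\mathbb{D}, dm)$ such that $\varphi_N \equiv 1$ on $B$ and the variance of the linear statistic $\mathsf{S}(\varphi_N) = \sum_{x} \varphi_N(x)$ under $\mathbb{P}_{K_\Lambda}$ tends to $0$. For a determinantal process with self-adjoint projection kernel $K$, one has $\mathrm{Var}_{\mathbb{P}}[\mathsf{S}(\varphi)] = \tfrac12 \iint |\varphi(z)-\varphi(w)|^2 |K(z,w)|^2 \, dm(z)\, dm(w) = \|\varphi\|^2_{K} - \langle K\varphi, \varphi\rangle$-type expressions; more usefully, since $K_\Lambda$ is a projection, vanishing variance in the limit is implied by finding $\varphi_N \to \mathbbm{1}_B$ in the appropriate sense with $\| (I-K_\Lambda)(\varphi_N \mathbbm{1}_B^{c}) \|$ controlled — I would follow Bufetov's formulation precisely: one needs $\varphi_N$ additively approximating constants on $B$ with $\iint_{\mathbb{D}\times\mathbb{D}} |\varphi_N(z)-\varphi_N(w)|^2 |K_\Lambda(z,w)|^2\,dm(z)\,dm(w) \to 0$.

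The key idea connecting this to the hypothesis $f_\Lambda \in \mathcal{B}$ is the observation, suggested by the paper's notation $K^f$, that the kernel $K_\Lambda(z,w) = \sum_{n\in\Lambda}(n+1)z^n\bar w^n$ is obtained from the Bergman kernel by a "lacunary restriction," and that for such kernels the relevant variance integral can be bounded in terms of Bloch-type quantities attached to $f_\Lambda$. Concretely, I would take test functions built from (cutoffs of) the reproducing-kernel-type object associated to $f_\Lambda$ — for instance functions of the form $\varphi_N(z) = \psi(|z|) \cdot$ (a partial sum or radial smoothing of $f_\Lambda$-weighted monomials) — chosen so that on $B$ (a compact subset of $\mathbb{D}$, so contained in $\{|z|\le r\}$ for some $r<1$) the function $\varphi_N$ is constant, while $\varphi_N$ "absorbs mass" near $\partial\mathbb{D}$. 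The point of the Bloch condition is that $\sup_{z}(1-|z|)|f_\Lambda'(z)| < \infty$ translates, after the substitution into the variance quadratic form, into a bound of the form $\mathrm{Var}[\mathsf S(\varphi_N)] \lesssim \|f_\Lambda\|_{\mathcal B}^2 \cdot \epsilon_N$ with $\epsilon_N \to 0$; the growth $(n+1)$ in the kernel is exactly the derivative weight $\sum_{n\in\Lambda}(n+1)|z|^{2n} \asymp |f_\Lambda'(|z|^2)|$-type expression (up to the factor relating $f_\Lambda'$ and the diagonal of $K^{f_\Lambda}$), which is why membership in $\mathcal B$ rather than, say, $H^2$ is the natural hypothesis.

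The main obstacle I anticipate is the passage from the abstract variance criterion to the explicit estimate: one must show that the off-diagonal decay of $K_\Lambda(z,w)$ near the boundary, together with the chosen test functions, genuinely forces the double integral to vanish, and this requires controlling $|K_\Lambda(z,w)|^2$ not just on the diagonal but for $z$ near $\partial\mathbb D$ and $w$ in the bulk — precisely where lacunarity of $\Lambda$ (encoded through $f_\Lambda \in \mathcal B$, e.g. the Hadamard gap condition being sufficient for Bloch membership) prevents the kind of resonance that makes the full Bergman kernel non-rigid. I would handle this by first reducing, via the projection property, to estimating $\|K_\Lambda \mathbbm{1}_{B} - \mathbbm{1}_{B}\|$-defect terms and the interaction $\langle K_\Lambda \mathbbm{1}_B, (I-K_\Lambda)\varphi_N\rangle$, then invoking the Bloch bound on $f_\Lambda$ to show that a suitable radial multiplier $\varphi_N$ drives the additive defect to zero while keeping $\varphi_N = \text{const}$ on $B$ intact; the Bloch norm enters as the uniform constant that makes this estimate independent of $N$ before the $\epsilon_N \to 0$ decay takes over. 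The remaining steps — verifying $\varphi_N \in L^2(\mathbb D)$, checking that the limiting linear statistic indeed equals $\#B$ up to an $\mathcal F_{\mathbb D \setminus B}$-measurable function, and assembling the rigidity conclusion from the variance-zero criterion — are routine given Bufetov's general machinery recalled in the introduction.
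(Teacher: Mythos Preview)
Your proposal correctly identifies the Ghosh--Peres variance criterion and the role of the Bloch hypothesis, but it misses the two concrete steps that actually make the argument work, and where it is specific it points in the wrong direction.

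First, the test functions. You suggest building $\varphi_N$ out of ``(cutoffs of) the reproducing-kernel-type object associated to $f_\Lambda$,'' and later you speak of analysing defect terms like $\|K_\Lambda \mathbbm{1}_B - \mathbbm{1}_B\|$ and cross terms $\langle K_\Lambda \mathbbm{1}_B, (I-K_\Lambda)\varphi_N\rangle$. None of this is needed, and it would be hard to carry out. The paper simply takes $\phi$ \emph{radial}, $\phi(z)=h(|z|)$. Then the angular integration in the variance formula is explicit:
\[
\int_0^{2\pi}\big|K_\Lambda(te^{i\alpha},se^{i\beta})\big|^2\,\frac{d\alpha}{2\pi}
=\sum_{n\in\Lambda}(n+1)^2(ts)^{2n}
= (ts)^4 f_\Lambda''\big((ts)^2\big)+3(ts)^2 f_\Lambda'\big((ts)^2\big)+f_\Lambda\big((ts)^2\big).
\]
This is the precise link between $K_\Lambda$ and $f_\Lambda$ that your proposal alludes to but never writes down. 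The Bloch condition (together with the standard equivalent formulation $(1-|z|)^2|f''(z)|\lesssim\|f\|_{\mathcal B}$) then bounds the right-hand side by $C/(1-(ts)^2)^2\le C/(1-ts)^2$, so
\[
\Var S_\phi \;\le\; C'\int_0^1\!\!\int_0^1 \frac{|h(t)-h(s)|^2}{(1-ts)^2}\,dt\,ds.
\]

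Second, and this is the genuine gap: having reduced to the last display, you still need to produce, for each $r_0<1$ and each $\epsilon>0$, a bounded compactly supported $h:[0,1)\to\mathbb R$ with $h\equiv 1$ on $[0,r_0]$ and the double integral $<\epsilon$. This is the content of the paper's Lemma~2.2, and it is not automatic: the kernel $(1-ts)^{-2}$ is exactly the full-Bergman weight on the radial variable, so one is in effect showing that the \emph{one-dimensional} problem is rigid even though the two-dimensional Bergman process is not. The paper constructs $h$ explicitly using the Poincar\'e metric, $h(t)=\rho(t,r)/\rho(r_0,r)$ on $[r_0,r]$, and verifies by direct (M\"obius-invariant) computation that the integral tends to $0$ as $r\to 1$. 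Your proposal contains no candidate for $h$ and no mechanism to make the variance small; phrases like ``a suitable radial multiplier $\varphi_N$ drives the additive defect to zero'' and ``$\epsilon_N\to 0$'' are placeholders for exactly this missing lemma.
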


We have a  criterion    when  $f_\Lambda$ is included in  the Bloch space involving lacunary series as follows. Let $\Lambda=\{\lambda_1,\lambda_2,\cdots\}$ be a subset  of $\mathbb{N}$
with $\lambda_1<\lambda_2<\cdots$. We say $\Lambda$ lacunary if it satisfies the gap condition
\begin{eqnarray}\label{gapratio}
 \rho_\Lambda:=\liminf_{k\in \mathbb{N} }\frac{\lambda_{k+1}}{\lambda_{k }}> 1. 
\end{eqnarray}
The following characterization   was already  hinted in the proof of   \cite[Lemma 10]{AC} by  Anderson and Shields. We remark a short proof for completeness.
\begin{proposition}\label{prop-lacunary} Let $\Lambda$ be  a subset of $\mathbb{N}$. We have that
	$f_\Lambda \in \mathcal{B}$ if and only if $\Lambda$ is a finite union of some lacunary subsets of  $\mathbb{N}$. 
\end{proposition}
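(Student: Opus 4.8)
The plan is to prove both directions via the standard "Bloch $=$ BMOA-type growth" description of lacunary Taylor series, but working directly with the defining quantity $\sup_{z\in\D}(1-|z|)|f'(z)|$ so as to keep everything elementary. For the easy direction, suppose first that $\Lambda = \{\lambda_1 < \lambda_2 < \cdots\}$ is a single lacunary set with $\rho_\Lambda > 1$; fix $q$ with $1 < q < \rho_\Lambda$, so that $\lambda_{k+1} \geq q\lambda_k$ for all large $k$ (and hence, after discarding finitely many terms which only changes $f_\Lambda$ by a polynomial, for all $k$). Then $f_\Lambda'(z) = \sum_k \lambda_k z^{\lambda_k - 1}$, and for $|z| = r$ one estimates
\[
(1-r)\,|f_\Lambda'(z)| \le (1-r)\sum_{k} \lambda_k r^{\lambda_k - 1}.
\]
Grouping the exponents $\lambda_k$ into dyadic-type blocks $\lambda_k \in [q^{j}, q^{j+1})$ — each block containing at most a bounded number $N = N(q)$ of indices by the gap condition — and using the elementary bound $\sum_{n} n r^{n-1} \asymp (1-r)^{-2}$ restricted to a geometric block (so that each block contributes $O\big((1-r) \cdot q^{j} r^{q^j}\big)$ and $\sum_j q^j r^{q^j} \asymp (1-r)^{-1}$ as $r\to 1$) shows $(1-r)|f_\Lambda'(z)| = O(1)$, i.e. $f_\Lambda \in \mathcal{B}$. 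Since $\mathcal{B}$ is a vector space, a finite union $\Lambda = \Lambda_1 \cup \cdots \cup \Lambda_m$ of lacunary sets gives $f_\Lambda = f_{\Lambda_1} + \cdots + f_{\Lambda_m}$ (up to overlaps, which only subtract polynomials) and hence $f_\Lambda \in \mathcal{B}$ as well.

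For the converse, suppose $f_\Lambda \in \mathcal{B}$ and write $\Lambda = \{\lambda_1 < \lambda_2 < \cdots\}$. The idea, following the argument hinted in \cite[Lemma 10]{AC}, is that the Bloch condition forces the number of $\lambda_k$ in any dyadic block $[2^j, 2^{j+1})$ to be uniformly bounded, say by $B$. Granting this, one partitions each such block's indices and reassembles: define $\Lambda^{(i)}$ for $i = 1, \dots, B$ by taking, from the indices of $\Lambda$ lying in $[2^j, 2^{j+1})$ listed in increasing order, the $i$-th one (if it exists) into $\Lambda^{(i)}$. Then $\Lambda = \bigcup_{i=1}^B \Lambda^{(i)}$, and within each $\Lambda^{(i)}$ consecutive elements either lie in the same dyadic block — impossible by construction, since each block contributes at most one element to $\Lambda^{(i)}$ — or in different blocks, whence their ratio is at least... well, at least bounded below, but one must be slightly careful: consecutive elements of $\Lambda^{(i)}$ coming from blocks $[2^j,2^{j+1})$ and $[2^{j'},2^{j'+1})$ with $j' > j$ have ratio $> 2^{j'-1}/2^{j} \ge 1/2 \cdot 2^{j'-j} \ge 1$; since $j'-j\ge 1$ this gives ratio $\ge 1$, and in fact $> 1$ can be arranged by refining the block construction (e.g. grouping by powers of $2$ but skipping alternate blocks, or by a direct pigeonhole on powers of $q$ for fixed $q>1$). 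Thus each $\Lambda^{(i)}$ is lacunary and $\Lambda$ is a finite union of lacunary sets.

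The main obstacle is the key counting estimate in the converse: showing that $f_\Lambda \in \mathcal{B}$ implies a uniform bound on $\#\big(\Lambda \cap [2^j, 2^{j+1})\big)$. The standard route is to test the Bloch norm against the point $r_j = 1 - 2^{-j}$: since $(1-r_j)|f_\Lambda'(r_j)| \le \|f_\Lambda\|_{\mathcal{B}}$ and $f_\Lambda'(r_j) = \sum_k \lambda_k r_j^{\lambda_k - 1}$ has all nonnegative terms, one gets $\sum_k \lambda_k r_j^{\lambda_k-1} \le 2^j \|f_\Lambda\|_{\mathcal{B}}$. For $\lambda_k \in [2^j, 2^{j+1})$ we have $r_j^{\lambda_k - 1} = (1 - 2^{-j})^{\lambda_k - 1} \ge c > 0$ (a universal constant, since the exponent is $\Theta(2^j)$ times $2^{-j} = \Theta(1)$), and $\lambda_k \ge 2^j$, so each such index contributes at least $c\,2^j$ to the sum; comparing with the upper bound $2^j\|f_\Lambda\|_{\mathcal{B}}$ forces $\#\big(\Lambda\cap[2^j,2^{j+1})\big) \le \|f_\Lambda\|_{\mathcal{B}}/c$, a bound independent of $j$. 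This is the crux; the rest is the bookkeeping of splitting $\Lambda$ into that many lacunary pieces as described above, and noting that a set with only finitely many elements in total is trivially lacunary (or can be absorbed into any one of the pieces without destroying the gap condition, after deleting finitely many terms).
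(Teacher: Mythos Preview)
Your proposal is correct and follows essentially the same approach as the paper: both directions hinge on the same two ideas, namely (i) the Bloch bound tested at $r=1-2^{-j}$ forces $\#\big(\Lambda\cap[2^j,2^{j+1})\big)\le C\|f_\Lambda\|_{\mathcal B}$, and (ii) a lacunary set contributes at most boundedly many indices per geometric block, whence $\sum_{k\in\Lambda}kr^{k}\lesssim(1-r)^{-1}$ by comparison with the full geometric series. The only presentational difference is that the paper carries out the decomposition cleanly by first splitting $\Lambda$ into $\Lambda^{even}=\Lambda\cap\bigcup_m[2^{2m},2^{2m+1})$ and $\Lambda^{odd}=\Lambda\cap\bigcup_m[2^{2m+1},2^{2m+2})$ and \emph{then} selecting one index per block within each half; this guarantees consecutive elements of each piece lie in non-adjacent dyadic intervals, so their ratio is at least $2$. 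You arrive at the same fix (``skipping alternate blocks'') after noting that the naive one-index-per-block construction can give ratios arbitrarily close to $1$; it would be cleaner to build the even/odd split in from the start rather than correcting mid-argument.
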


Now we turn to the probabilistic method.  Throughout the paper, suppose that $(\xi_n)_{n=0}^{\infty}$ is a sequence of independent Bernoulli random variables with
\begin{align}\label{def-xi-n}
\xi_n = \left\{
\begin{array}{cl}
1 & \text{with probability $\frac{1}{n+1}$}
\vspace{2mm}
\\
0 & \text{with probability $1 -\frac{1}{n+1}$}
\end{array}
\right..
\end{align}
We shall consider the random analytic function  on the unit disc $\mathbb{D}$:
\begin{align}\label{def-f}
f_\xi=\sum_{n=0}^\infty \xi_n z^n.
\end{align}
By Kolmogorov Three Series Theorem, almost surely, we have $\sum_{n=0}^\infty \xi_n  = \infty$. Therefore, for almost every realization $(\xi_n)_{n=0}^\infty$, the kernel 
\begin{align}\label{def-K-f-xi}
K^{f_\xi}(z, w) = \sum_{n = 0}^\infty \xi_n (n +1) z^n \bar{w}^n
\end{align}
is an	orthogonal projection onto the following infinite dimensional subspace 
\[
\overline{\mathrm{span}}^{L^2(\mathbb{D})}\Big\{z^n \Big| \text{$n\in \mathbb{N}$ such that $\xi_n =1$}\Big\} \subset L^2_a(\mathbb{D})
\]
 and yields a determinantal point process on the unit disc $\mathbb{D}$.  
 
\begin{theorem}\label{main-thm1}
For almost every realization $\xi$, 	the determinantal point process induced by the kernel $K^{f_\xi}(z,w)$ is number rigid.
\end{theorem}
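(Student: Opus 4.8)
The plan is to reduce Theorem~\ref{main-thm1} to the machinery already built for Theorem~\ref{main-thm-bloch}, replacing the pointwise kernel estimate $|K_\Lambda(z,w)|\lesssim (1-|z||w|)^{-1}$ (which, being equivalent to a Bloch-type bound, fails almost surely for $\Lambda=\Lambda_\xi:=\{n:\xi_n=1\}$) by a bound on the $\xi$-\emph{averaged} squared kernel. Recall the standard sufficient condition for number rigidity of a DPP $\mathbb{P}$ with projection kernel $K$ (Bufetov~\cite{B}): it suffices that for every relatively compact Borel set $B\subset\mathbb{D}$ and every $\varepsilon>0$ there is a bounded $\varphi\colon\mathbb{D}\to\mathbb{R}$ supported in a compact subset of $\mathbb{D}$ with $\varphi\equiv 1$ on $B$ and, with $S_\varphi=\sum_{x}\varphi(x)$,
\[
\Var_{\mathbb{P}}(S_\varphi)=\frac12\iint_{\mathbb{D}^2}|\varphi(z)-\varphi(w)|^2\,|K(z,w)|^2\,dm(z)\,dm(w)<\varepsilon ;
\]
indeed $S_\varphi=\#B+\sum_x\varphi(x)\mathbbm{1}_{\mathbb{D}\setminus B}(x)$ forces the $L^2$-norm of $\#B-\mathbb{E}[\#B\mid\mathcal{F}_{\mathbb{D}\setminus B}]$ to be at most $\sqrt{\Var_{\mathbb{P}}(S_\varphi)}$.

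The key new input is an exact formula for $\mathbb{E}_\xi\big[|K^{f_\xi}(z,w)|^2\big]$. Expanding $K^{f_\xi}(z,w)=\sum_n\xi_n(n+1)(z\bar w)^n$, writing $u=z\bar w$, and using that the $\xi_n$ are independent with $\mathbb{E}[\xi_n]=\mathbb{E}[\xi_n^2]=\tfrac1{n+1}$, the terms $n\neq m$ contribute $\sum_{n\neq m}u^n\bar u^m$ and the diagonal contributes $\sum_n(n+1)|u|^{2n}$, whence
\[
\mathbb{E}_\xi\big[|K^{f_\xi}(z,w)|^2\big]=\frac{1}{|1-z\bar w|^2}+\frac{|z\bar w|^2}{(1-|z\bar w|^2)^2}\le \frac{2}{(1-|z||w|)^2},
\]
using $|1-z\bar w|\ge 1-|z||w|$ and $1-|z\bar w|^2\ge 1-|z||w|$. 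Thus the averaged squared kernel obeys exactly the bound that the genuine sub-Bergman kernel satisfies in Theorem~\ref{main-thm-bloch}.

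Now I would borrow the test functions used in the proof of Theorem~\ref{main-thm-bloch}: for a relatively compact $B\subset r_0\mathbb{D}$ one takes radial cut-offs $\varphi_\delta$ ($0<\delta<1-r_0$) equal to $1$ on $r_0\mathbb{D}$, supported in $(1-\delta)\mathbb{D}$, and varying affinely in the variable $\log\frac{1}{1-|z|}$, for which
\[
\iint_{\mathbb{D}^2}|\varphi_\delta(z)-\varphi_\delta(w)|^2\,\frac{dm(z)\,dm(w)}{(1-|z||w|)^2}\;\le\;\frac{C(r_0)}{\log(1/\delta)}\;\xrightarrow[\delta\to 0]{}\;0 .
\]
By Tonelli and the displayed kernel bound,
\[
\mathbb{E}_\xi\big[\Var_{\mathbb{P}_{f_\xi}}(S_{\varphi_\delta})\big]=\frac12\iint|\varphi_\delta(z)-\varphi_\delta(w)|^2\,\mathbb{E}_\xi\big[|K^{f_\xi}(z,w)|^2\big]\,dm\,dm\le\frac{C'(r_0)}{\log(1/\delta)} .
\]
(One checks routinely that $S_{\varphi_\delta}\in L^2(\mathbb{P}_{f_\xi})$ a.s.---e.g.\ $\mathbb{E}_{\mathbb{P}_{f_\xi}}[\#(r\mathbb{D})^2]$ is finite, even bounded uniformly for $r<1$---so the variance identity applies.)

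Finally I would convert ``small on average in $\xi$'' into ``small a.s.''\ and then remove the dependence on the test set. Fix a countable family $\mathcal{B}_0$ of closed balls $\overline{B(q,r)}\subset\mathbb{D}$ with rational $q,r$; it is standard that if $\#B$ is $\mathcal{F}^{\mathbb{P}}_{\mathbb{D}\setminus B}$-measurable for every $B\in\mathcal{B}_0$ then $\mathbb{P}$ is number rigid. For each $B\in\mathcal{B}_0$ pick $\delta_k=\delta_k(B)\to0$ with $C'(r_0)/\log(1/\delta_k)\le 2^{-k}$; then $\sum_k\mathbb{E}_\xi\big[\Var_{\mathbb{P}_{f_\xi}}(S_{\varphi_{\delta_k}})\big]<\infty$, so by monotone convergence $\sum_k\Var_{\mathbb{P}_{f_\xi}}(S_{\varphi_{\delta_k}})<\infty$ for a.e.\ $\xi$, hence $\Var_{\mathbb{P}_{f_\xi}}(S_{\varphi_{\delta_k}})\to 0$ and the Bufetov criterion gives that $\#B$ is $\mathcal{F}^{\mathbb{P}_{f_\xi}}_{\mathbb{D}\setminus B}$-measurable for a.e.\ $\xi$. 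Intersecting these countably many almost-sure events over $B\in\mathcal{B}_0$ yields Theorem~\ref{main-thm1}. I expect the only genuine subtleties to be this last bookkeeping step---producing a single full-measure set of $\xi$'s valid for all test sets simultaneously---and verifying that the cut-off functions and the logarithmic integral estimate from the proof of Theorem~\ref{main-thm-bloch} transfer verbatim; the kernel averaging itself is a one-line exact computation.
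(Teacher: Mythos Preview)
Your proof is correct and follows essentially the same route as the paper: compute the $\xi$-expectation of the variance integral, bound it by the radial double integral of Lemma~\ref{lem-key}, and use summability to upgrade ``small in expectation'' to ``tends to zero almost surely''. The only differences are cosmetic---the paper averages over the angular variable first (which kills the cross term $|1-z\bar w|^{-2}$ in your formula for $\mathbb{E}_\xi|K^{f_\xi}|^2$) before taking $\mathbb{E}_\xi$, and you are in fact more careful than the paper in passing to a single full-measure set of $\xi$ valid for all bounded $B$ via the countable family $\mathcal{B}_0$.
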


Our probabilistic method yields indeed different construction of number rigid determinantal point processes on $\mathbb{D}$ by the following 
\begin{proposition}\label{prop-nolacunary}
Almost surely, the function $f_\xi$ is not included in the Bloch space $\mathcal{B}$. Or  equivalently, almost surely, the subset 
\[
\Lambda_\xi: = \{n\in \mathbb{N}| \xi_n  =1\}
\]
is not a finite union of lacunary subsets of $\mathbb{N}$. 
\end{proposition}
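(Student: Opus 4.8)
The plan is to show that, almost surely, the random set $\Lambda_\xi = \{n : \xi_n = 1\}$ contains, for every fixed $q > 1$, infinitely many pairs of consecutive elements whose ratio is at most $q$; this forces $\Lambda_\xi$ to fail the property of being a finite union of lacunary sets, since in a finite union of $r$ lacunary sets with common gap ratio bound $\rho > 1$, any $r+1$ consecutive elements must span a ratio at least $\rho$, so for a suitable $q$ close to $1$ no such dense cluster can occur. Equivalently, by Proposition \ref{prop-lacunary} this is the statement that $f_\xi \notin \mathcal{B}$ almost surely.

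The key mechanism is a Borel--Cantelli argument on the independent events $\xi_n = 1$, which have probability $\frac{1}{n+1}$. Fix $q > 1$ and an integer $r \ge 1$. I would partition $\mathbb{N}$ into the dyadic-type blocks $I_j = [\,q^{j}, q^{j+1})$, or rather integer blocks of geometrically increasing length, and observe that the expected number of indices $n \in I_j$ with $\xi_n = 1$ is comparable to $\sum_{n \in I_j} \frac{1}{n+1} \approx \log q$, a positive constant independent of $j$. Since the $\xi_n$ are independent, a second-moment (Paley--Zygmund) estimate shows that, with probability bounded below by a constant $c = c(q) > 0$, the block $I_j$ contains at least two elements of $\Lambda_\xi$; in fact, refining the block size, one gets that with positive probability a block of the form $[\,N, qN)$ contains $r+1$ elements of $\Lambda_\xi$, whose successive ratios are then all $\le q$. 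These events for a sparse subsequence of blocks (say $N = q^{j_k}$ with the $j_k$ spaced so the blocks are disjoint) are independent and each has probability $\ge c(q) > 0$, so by the second Borel--Cantelli lemma infinitely many of them occur almost surely. Taking a countable intersection over $q = 1 + \frac1m$ and all $r$, we conclude that almost surely $\Lambda_\xi$ contains arbitrarily dense finite clusters, hence is not a finite union of lacunary sets.

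The main obstacle is making the counting-vs-lacunarity implication precise: I must verify the elementary fact that if $\Lambda$ is the union of $r$ lacunary sets, each with gap ratio $\liminf \lambda_{k+1}/\lambda_k \ge \rho > 1$ \emph{eventually}, then beyond some threshold any $r+1$ consecutive elements $m_0 < m_1 < \cdots < m_r$ of $\Lambda$ satisfy $m_r / m_0 \ge \rho$ (pigeonhole: two of them lie in the same lacunary piece). This requires care with the "eventually" in the $\liminf$, but is routine. The other technical point is the lower bound on the probability that a geometric block contains $\ge r+1$ successes; since within a block the success probabilities $\frac{1}{n+1}$ vary only by a bounded factor, this follows from a standard comparison with independent trials of roughly equal probability and a lower tail bound for sums of independent Bernoullis, and I expect no real difficulty there.
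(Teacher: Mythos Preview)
Your proposal is correct and follows essentially the same strategy as the paper: both apply the second Borel--Cantelli lemma to the independent block counts, showing that arbitrarily many points of $\Lambda_\xi$ fall in a single geometric interval infinitely often, which is incompatible with being a finite union of lacunary sets. The paper's execution is a bit more streamlined in that it fixes dyadic blocks $I_n=(2^n,2^{n+1}]$, computes $\mathrm{Prob}[\mathcal{N}_n=C]$ directly (no Paley--Zygmund and no countable intersection over $q$ are needed), and invokes the observation already implicit in Proposition~\ref{prop-lacunary} that a finite union of lacunary sets meets each $I_n$ in a uniformly bounded number of points.
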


\section{Rigidity of DPP with sub-Bergman kernels}
 This section is devoted to establish the existence  of  number rigid determinantal point processes on the unit disc $\mathbb{D}$ with sub-Bergman kernels. 

For a bounded measurable compactly supported function $\phi$ on $\mathbb{D}$, we denote by     $S_\phi$   the additive functional on the configuration space $\Conf(\mathbb{D})$  defined by the formula
$$S_\phi(X) = \int_{\mathbb{D}} \phi dX.$$   The following sufficient condition for number rigidity of a point process is showed by Ghosh \cite{G}  and 
Ghosh, Peres   \cite{GP}.

\begin{proposition}[Ghosh and Peres]\label{prop-GP} 
Let $\mathbb{P} $ be a Borel probability measure
	on $\Conf(M)$. Assume that for any $\epsilon > 0$, and any bounded subset $B\subseteq M$, there
	exists a bounded measurable function $\phi: M \rightarrow \mathbb{C}$ of compact support such that $\phi \equiv  1$ on $B$,
	and $\Var S_\phi \leq \epsilon$. Then $\mathbb{P}$ is number rigid. \end{proposition}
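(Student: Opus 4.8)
The plan is to run the standard $L^{2}$-approximation argument of Ghosh and of Ghosh--Peres. Fix a bounded Borel set $B\subseteq M$; by the definition of number rigidity it suffices to show that the counting variable $\#B$ is $\mathcal{F}_{M\setminus B}^{\mathbb{P}}$-measurable. For each integer $n\ge 1$, apply the hypothesis with $\epsilon=4^{-n}$ to obtain a bounded, compactly supported function $\phi_{n}\colon M\to\mathbb{C}$ with $\phi_{n}\equiv 1$ on $B$ and $\Var S_{\phi_{n}}\le 4^{-n}$ (the finiteness of the variance is read as $S_{\phi_{n}}\in L^{2}(\mathbb{P})$, so that $\mathbb{E}_{\mathbb{P}}[S_{\phi_{n}}]$ is finite). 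Since $\phi_{n}\equiv 1$ on $B$, we have the splitting
\[
S_{\phi_{n}}(X)=\int_{B}\phi_{n}\,dX+\int_{M\setminus B}\phi_{n}\,dX=\#B(X)+Y_{n}(X),\qquad Y_{n}(X):=\int_{M\setminus B}\phi_{n}\,dX .
\]
The conceptual point is that $Y_{n}$ depends on a configuration $X$ only through its restriction to $M\setminus B$, and is therefore $\mathcal{F}_{M\setminus B}$-measurable: indeed $\phi_{n}\mathbbm{1}_{M\setminus B}$ is supported in $M\setminus B$, so $S_{\phi_{n}\mathbbm{1}_{M\setminus B}}$ is a pointwise limit of finite linear combinations of the counting variables $\#B'$ with $B'\subseteq M\setminus B$ Borel, each of which generates $\mathcal{F}_{M\setminus B}$.

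Next, the $L^{2}$ bounds are summable:
\[
\sum_{n\ge 1}\big\|S_{\phi_{n}}-\mathbb{E}_{\mathbb{P}}[S_{\phi_{n}}]\big\|_{L^{2}(\mathbb{P})}=\sum_{n\ge 1}\sqrt{\Var S_{\phi_{n}}}\le\sum_{n\ge 1}2^{-n}<\infty .
\]
By monotone convergence $\mathbb{E}_{\mathbb{P}}\big[\sum_{n\ge1}|S_{\phi_{n}}-\mathbb{E}_{\mathbb{P}}[S_{\phi_{n}}]|\big]<\infty$, so the series converges $\mathbb{P}$-a.s.\ and in particular $S_{\phi_{n}}-\mathbb{E}_{\mathbb{P}}[S_{\phi_{n}}]\to 0$ $\mathbb{P}$-almost surely. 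Inserting the splitting $S_{\phi_{n}}=\#B+Y_{n}$ and rearranging, we get that $\mathbb{P}$-almost surely
\[
\#B=\lim_{n\to\infty}\big(\mathbb{E}_{\mathbb{P}}[S_{\phi_{n}}]-Y_{n}\big).
\]
Each $\mathbb{E}_{\mathbb{P}}[S_{\phi_{n}}]-Y_{n}$ is a finite constant minus an $\mathcal{F}_{M\setminus B}$-measurable random variable, hence $\mathcal{F}_{M\setminus B}$-measurable. Thus $\#B$ is a $\mathbb{P}$-a.s.\ limit of $\mathcal{F}_{M\setminus B}$-measurable functions, and by $\mathbb{P}$-completeness of $\mathcal{F}_{M\setminus B}^{\mathbb{P}}$ it is $\mathcal{F}_{M\setminus B}^{\mathbb{P}}$-measurable. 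Since $B$ was an arbitrary bounded Borel set, $\mathbb{P}$ is number rigid.

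I do not expect a genuine obstacle; the argument is soft. The two points deserving a moment's care are: (i) the measurability claim for $Y_{n}=\int_{M\setminus B}\phi_{n}\,dX$, handled by the simple-function approximation indicated above; and (ii) the integrability bookkeeping, i.e.\ reading ``$\Var S_{\phi}\le\epsilon$'' as ``$S_{\phi}\in L^{2}(\mathbb{P})$ with variance at most $\epsilon$'', which is only used to give meaning to $\mathbb{E}_{\mathbb{P}}[S_{\phi_{n}}]$. A slightly slicker variant, available once one knows in addition that $\#B\in L^{2}(\mathbb{P})$ (for instance when the test functions can be taken nonnegative, so that $0\le\#B\le S_{\phi_{n}}$), replaces the a.s.\ limit by conditional expectation: since $\mathbb{E}_{\mathbb{P}}[S_{\phi_{n}}\mid\mathcal{F}_{M\setminus B}]$ is the $L^{2}$-projection of $S_{\phi_{n}}$ onto $\mathcal{F}_{M\setminus B}$-measurable functions, one has $\|S_{\phi_{n}}-\mathbb{E}_{\mathbb{P}}[S_{\phi_{n}}\mid\mathcal{F}_{M\setminus B}]\|_{L^{2}(\mathbb{P})}\le\|S_{\phi_{n}}-\mathbb{E}_{\mathbb{P}}[S_{\phi_{n}}]\|_{L^{2}(\mathbb{P})}\le 2^{-n}$; subtracting $\mathbb{E}_{\mathbb{P}}[\,\cdot\mid\mathcal{F}_{M\setminus B}]$ from $S_{\phi_{n}}=\#B+Y_{n}$ and using that $Y_{n}$ is $\mathcal{F}_{M\setminus B}$-measurable shows $\|\#B-\mathbb{E}_{\mathbb{P}}[\#B\mid\mathcal{F}_{M\setminus B}]\|_{L^{2}(\mathbb{P})}\le 2^{-n}$ for every $n$, forcing $\#B=\mathbb{E}_{\mathbb{P}}[\#B\mid\mathcal{F}_{M\setminus B}]$ $\mathbb{P}$-a.s.
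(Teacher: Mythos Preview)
The paper does not prove this proposition; it is quoted as a result of Ghosh and Ghosh--Peres and used as a black box. Your argument is correct and is precisely the standard proof from those references: choose test functions $\phi_n$ with summable variance, split $S_{\phi_n}=\#B+Y_n$ with $Y_n$ measurable with respect to $\mathcal{F}_{M\setminus B}$, and pass to an almost-sure limit to exhibit $\#B$ as $\mathcal{F}_{M\setminus B}^{\mathbb{P}}$-measurable.
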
 

Recall that for a point process $\mathbb{P}$ with an orthogonal projection kernel $K(x,y)$ on the unit disc $\mathbb{D}$ of locally trace class, we have 
\begin{align}\label{def-var-f}
\Var  S_\phi= \frac{1}{2}\int_{\mathbb{D}} \int_{\mathbb{D}}  | \phi(x)-\phi(y)|^2 \cdot |K(x,y)|^2 dm(x) dm(y).
\end{align}

The following lemma is our key  estimation.
\begin{lemma}\label{lem-key} 
For any $\epsilon>0$ and $0<r_0<1$, there exists a bounded measurable function $h: [0, 1) \rightarrow \mathbb{R}$ of compact support on $[0,1)$  such that  $h \equiv  1$ on $[0,r_0]$ and 
	$$\int_{[0,1)}\int_{[0,1)} |h(t)-h(s)|^2 \frac{1}{(1-st)^2 }  ds dt<\epsilon.$$
\end{lemma}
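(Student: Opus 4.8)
\emph{The plan} is to turn the weighted double integral over $[0,1)^2$ into a translation‑invariant energy on the half‑line, where the relevant kernel turns out to be integrable, and then make the energy small simply by spreading the cutoff over a long region.

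\emph{Step 1 (two changes of variables).} First substitute $u = 1-t$, $v = 1-s$. Then $1-st = u+v-uv$, and since $uv \le \tfrac12(u+v)$ by AM--GM we have $u+v-uv \ge \tfrac12(u+v)$, hence $(1-st)^{-2} \le 4(u+v)^{-2}$. So it suffices to produce $\tilde h$ on $(0,1]$ with $\tilde h\equiv 1$ on $[1-r_0,1]$, $\tilde h\equiv 0$ near $0$, and $\int_0^1\!\int_0^1 |\tilde h(u)-\tilde h(v)|^2 (u+v)^{-2}\,du\,dv$ arbitrarily small; then $h(t):=\tilde h(1-t)$ works. Next substitute $u = e^{-x}$, $v = e^{-y}$ with $x,y\in[0,\infty)$ and put $\psi(x):=\tilde h(e^{-x})$. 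A direct computation gives
\[
\frac{du\,dv}{(u+v)^2} \;=\; \frac{e^{-x-y}\,dx\,dy}{(e^{-x}+e^{-y})^2} \;=\; \frac{dx\,dy}{4\cosh^2\!\big(\tfrac{x-y}{2}\big)},
\]
so the integral becomes $\int_0^\infty\!\int_0^\infty |\psi(x)-\psi(y)|^2\,\kappa(x-y)\,dx\,dy$ with $\kappa(r)=(4\cosh^2(r/2))^{-1}$, and one checks $\int_{\mathbb{R}}\kappa = 1$ and $\int_{\mathbb{R}} r^2\kappa(r)\,dr <\infty$. In these coordinates the constraints on $\psi$ read: $\psi\equiv 1$ on $[0,x_0]$ with $x_0 := \log\frac{1}{1-r_0}$, and $\psi\equiv 0$ on $[x_0+L,\infty)$ for some finite $L$ (the latter is what forces $h$ to have compact support in $[0,1)$).

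\emph{Step 2 (the cutoff).} Take $\psi$ piecewise linear: $\psi\equiv 1$ on $A:=[0,x_0]$, linear from $1$ to $0$ on $B:=[x_0,x_0+L]$, and $\psi\equiv 0$ on $C:=[x_0+L,\infty)$; then $0\le\psi\le1$ and $\psi$ is $\tfrac1L$‑Lipschitz. Split $[0,\infty)^2$ into the nine blocks $A,B,C$. The integrand vanishes on $A\times A$ and $C\times C$. On any block meeting $B$, bound $|\psi(x)-\psi(y)|^2\le |x-y|^2/L^2$; integrating the full‑line kernel $\int_{\mathbb{R}} r^2\kappa(r)\,dr$ in the "free" coordinate and the length $L$ of $B$ in the other gives a contribution $O(1/L)$. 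On $A\times C$ (and $C\times A$) use $|\psi(x)-\psi(y)|^2\le1$ together with $|x-y|\ge L$ and the exponential decay $\int_L^\infty\kappa(r)\,dr \le e^{-L}$; since $A$ has length $x_0$, this contributes $O(x_0\,e^{-L})$. Thus the energy is at most $C_1/L + C_2 x_0 e^{-L}$ with absolute constants, and choosing $L$ large enough (depending only on $\epsilon$ and $r_0$) makes the original integral $<\epsilon$. Undoing the substitutions gives the desired $h(t)=\psi\!\big(-\log(1-t)\big)$, which is continuous, bounded, equal to $1$ on $[0,r_0]$, and supported in $[0,1-e^{-(x_0+L)}]\subset[0,1)$.

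\emph{The main obstacle} is that the naive estimate $|\psi(x)-\psi(y)|^2\le 1$ is by itself useless here: because $\int_{\mathbb{R}}\kappa = 1 > 0$, the contribution of $x$ in the transition region $B$ is then only controlled by $|B|=L$, which does \emph{not} tend to $0$. One genuinely needs the Lipschitz bound on a \emph{long} transition region, so that the gain $L^{-2}$ from the slope beats the loss $L$ from the length. Recognizing the substitution $t=1-e^{-x}$ that exposes the translation‑invariant, exponentially decaying kernel $\kappa$ is the conceptual point that makes this balance transparent and reduces the lemma to the elementary block estimate above.
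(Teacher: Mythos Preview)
Your proof is correct and takes a genuinely different route from the paper's. The paper constructs $h$ explicitly via the Poincar\'e metric,
\[
h(t)=\frac{\rho(t,r)}{\rho(r_0,r)}\quad\text{on }[r_0,r],
\]
and then exploits the M\"obius invariance of $\rho$ and of the weight $(1-st)^{-2}$: after the substitutions $t\mapsto\varphi_{r_0}(t)$, $s\mapsto\varphi_{r_0}(s)$ (and similarly with $\varphi_r$, $\varphi_t$) the three nontrivial regions are reduced to integrals that visibly tend to~$0$ as $\varphi_{r_0}(r)\to1$. Your argument instead linearises the problem: the substitution $t=1-e^{-x}$ turns the weight into the translation-invariant, exponentially decaying kernel $\kappa(r)=(4\cosh^2(r/2))^{-1}$, after which a piecewise linear cutoff and the elementary block estimate $\int r^2\kappa<\infty$ do the job. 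The two constructions are in fact close cousins---on the transition interval your $h(t)$ equals $\dfrac{\log\frac{1-t}{1-r}}{\log\frac{1-r_0}{1-r}}$, which differs from the paper's $\rho(t,r)/\rho(r_0,r)$ only by bounded factors of $1+t$---but your execution avoids any appeal to hyperbolic geometry and gives an explicit rate $O(L^{-1})+O(x_0e^{-L})$, whereas the paper's M\"obius calculus is slicker once one knows the conformal invariance. One cosmetic point: the inequality $uv\le\tfrac12(u+v)$ for $u,v\in(0,1]$ is not AM--GM but simply $\tfrac1u+\tfrac1v\ge2$; the bound itself is of course correct.
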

We will postpone the proof for the  lemma  until the next section.

\subsection{Rigid  kernel  via  the Bloch functions}

\begin{proof}[Proof of Theorem \ref{main-thm-bloch}]
Assume that $f_\Lambda \in \mathcal{B}$ and consider the determinantal point process $\mathbb{P}$ on $\mathbb{D}$ induced by the orthogonal projection kernel $K^{f_\Lambda} (z,w)= K_\Lambda(z,w)$.  Since $f_\Lambda \in  \mathcal{B}$, by \cite[Thm 5.13]{Zhu}, there exists $C>0$, such that  
\begin{align}\label{bloch-cond}
(1 - |z|)| f_\Lambda'(z)| \le \| f_\Lambda \|_\mathcal{B} \text{\, and \,} (1-|z|)^2 |f_\Lambda''(z)|\leq C \|f_\Lambda\|_\mathcal{B}. \end{align}
Let $\phi: \mathbb{D} \rightarrow \mathbb{R}_{+}$ be any compactly supported bounded radial function.  Then by \eqref{def-var-f}, we have 
	\begin{eqnarray*}
2 \Var S_\phi   & =&\int_{z\in\mathbb{D}}\int_{w\in\mathbb{D}} |\phi(z)-\phi(w)|^2  |K_\Lambda(z,w)|^2  dm(z) dm(w)\\
&=&\int_{t\in[0,1)}\int_{s\in[0,1)}  |\phi(t)-\phi(s)|^2 \Big[ \int_{\alpha\in[0,2\pi]} \int_{\beta\in[0,2\pi]} |K_\Lambda(te^{i \alpha},se^{i \beta})|^2  \frac{d\alpha}{\pi} \frac{d\beta}{\pi} \Big] ts dt ds. 
\end{eqnarray*}
Note that 
\begin{eqnarray*}
 \int_{\alpha \in[0,2\pi]} |K_\Lambda(te^{i \alpha},se^{i \beta})|^2  \frac{d\alpha}{2\pi} & =&  \int_{\alpha \in [0, 2\pi]} \Big| \sum_{n\in \Lambda} (n +1) t^n  s^n e^{i n (\alpha-\beta)} \Big|^2 \frac{d\alpha}{2\pi} 
\\
&  =  & \sum_{n\in \Lambda} (n+1)^2 (ts)^{2n}. 
\end{eqnarray*}
Moreover,  we have  
\begin{eqnarray*}
\sum_{n\in \Lambda} (n+1)^2 (ts)^{2n} &=& \sum_{n\in \Lambda}\Huge{[} n(n-1) (ts)^{2n-4}t^4s^4+3 n (ts)^{2n-2}t^2s^2+  (ts)^{2n} \Huge{]}\\
&=&  t^4s^4f_\Lambda^{''}(t^2s^2)+3 t^2 s^2 f_\Lambda'(t^2 s^2)+f_\Lambda(t^2 s^2).
\end{eqnarray*}
Therefore, by \eqref{bloch-cond}, there exists $C' > 0$ such that
\begin{eqnarray*}
2 \Var S_\phi &=& 4 \int_{[0,1)}\int_{[0,1)}  |\phi(t)-\phi(s)|^2 \sum_{n\in \Lambda} (n+1)^2 (ts)^{2n}   ts dt ds \\
&= & 4 \int_{[0,1)}\int_{[0,1)}    |\phi(t)-\phi(s)|^2  \Big[t^4s^4f_\Lambda^{''}(t^2s^2)+3 t^2 s^2 f_\Lambda'(t^2 s^2)+f_\Lambda(t^2 s^2)\Big] ts dt ds\\
&\leq & 4C' \int_{[0,1)}\int_{[0,1)}  |\phi(t)-\phi(s)|^2  \frac{1}{(1-t^2s^2)^2} dt ds
\\
& \leq & 4C' \int_{[0,1)}\int_{[0,1)}  |\phi(t)-\phi(s)|^2  \frac{1}{(1-ts)^2} dt ds. 
	\end{eqnarray*}
For any $\epsilon>0$ and any $0< r_0< 1$, if we take $h_{r_0, \epsilon}$ to be the function appeared in Lemma \ref{lem-key} and set 
\[
 \phi_{r_0, \epsilon}(z) = h_{r_0, \epsilon}(|z|),
\]
then $\phi_{r_0, \epsilon}: \mathbb{D}\rightarrow \mathbb{R}$ is a bounded measurable function of compact support  such that $\phi_{r_0, \epsilon} \equiv  1$ on $\{z\in \mathbb{D}: |z|\le r_0\}$
	and 
\[
\Var S_{\phi_{r_0, \epsilon}} \leq \epsilon.
\]
Since any compact subset $B\subset \mathbb{D}$ is included in $\{z\in \mathbb{D}: |z|\le r_0\}$ for some $r_0\in (0, 1)$, by Proposition \ref{prop-GP},   we complete the proof  of Theorem \ref{main-thm-bloch} with the use of Lemma \ref{lem-key}. 
\end{proof}

\begin{proof}[Proof of Proposition \ref{prop-lacunary}]
  Suppose that	$f_\Lambda(z)=\sum_{n\in \Lambda} z^n \in \mathcal{B}$. We have that
  $$\sum_{k\in \Lambda} k   r^k=rf'(r)\leq \frac{\|f\|_{\mathcal{B}}}{1-r}.$$
  For any fixed integer $M \ge 2$, set $r=1-\frac{1}{M} $. Note that there exists $c>0$ such that $r^k>c$ for any $k\leq M$. It follows that for the constant $c'=\frac{1}{c}>0$,
  $$\sum_{1\le k\le M,\, k\in \Lambda}  k \leq  \sum_{1\leq k\leq M, \, k\in \Lambda}  k \frac{r^k}{c}  \leq  c'  \sum_{k\in \Lambda} k r^k \le c' \frac{\| f\|_\mathbb{B}}{1 - r} = c'\| f\|_\mathbb{B}  M  .$$
  This implies that
  $$\sum_{2^n\leq k <2^{n+1}, \, k\in \Lambda } 1\leq \sum_{2^n\leq k <2^{n+1}, \, k\in \Lambda } \frac{k}{2^n}\leq \frac{c' \|f \|_\mathbb{B} 2^{n+1}}{2^n}\leq 2  c' \|f \|_\mathbb{B}.  $$
  To ease the notations, write $q:= [2 c'\|f\|_{\mathbb{B}}]+1$ and $$ \Lambda^{even}:=\Lambda\cap\,\,  \bigcup_{m\in{\mathbb{N}}} I_{2m}, \, \Lambda^{odd}:=\Lambda\cap \bigcup_{m\in{\mathbb{N}}} I_{2m+1},$$  where $I_n=\{k\in {\mathbb{N}}:2^n\leq k <2^{n+1}\}$. Then there exist  subsets $\{\Lambda_i^{even}\}_{i=1}^q$ such that $ \Lambda^{even}  =\cup_{i=1}^{q}{\Lambda_i^{even} } ,$ and 
  each ${\Lambda_i^{even} }$ has at most one element inside $I_{2m}$ and no element included in $I_{2m+1}$ for any $m\in {\mathbb{N}}$. That is, each ${\Lambda_i^{even}}$ is either a finite subset or a subset which satisfies the gap condition \eqref{gapratio}
  	 with  the  gap ratio not less than $2$.  This implies $ \Lambda^{even}$ is the union of at most $q$ many lacunary subsets. The same argument also holds for $\Lambda^{odd}$. Therefore,  $\Lambda$ is the union of at most $2q$ many lacunary subsets.
  
  On the other hand, without loss of generality,  suppose that $\Lambda=\{\lambda_1,\lambda_2,\cdots\}$ is a lacunary subset  with  $\lambda_1< \lambda_2<\cdots$ and $\inf_{k\in \mathbb{N} }\frac{\lambda_{k+1}}{\lambda_{k }}\geq 2$. This implies that
  $$\sum_{2^n\leq k <2^{n+1},\, k\in \Lambda}  1\leq  1 ,\,\, \forall n\in \mathbb{N}$$ and hence for any $r\in (0, 1)$ and any integer $n\ge 1$,  we have 
\[
\sum_{2^n \le k < 2^{n+1}, \, k\in \Lambda} k r^k  \le \sup_{2^n \le k < 2^{n+1}} k r^k \le 2^{n+1} r^{2^n}. 
\]
  Therefore,  by noting 
\[
 \sum_{2^{n-1}\leq k <2^{n}}  4  r^{k} \ge  4 r^{2^n} (2^n - 2^{n-1}) =  2^{n+1} r^{2^n},
\]
for any $r\in (0, 1)$, 
we have 
  $$\sum_{k\in \Lambda,k\geq 2}  k  r^k=\sum_{n=1}^\infty  \sum_{2^n\leq k <2^{n+1},k\in \Lambda}  k  r^k\leq  \sum_{n=1}^\infty    2^{n+1}  r^{2^n}\leq  \sum_{n=1}^\infty  \sum_{2^{n-1}\leq k <2^{n}}  4  r^{k} \leq \frac{4}{1-r}.$$
  This implies that $f_\Lambda\in \mathcal{B}$.
\end{proof}

\subsection{Rigid kernel  via probabilistic methods}

\begin{proof}[Proof of Theorem \ref{main-thm1}]  

By  the definition \eqref{def-K-f-xi} of the kernel $K^{f_\xi}(z, w)$, we have that 
\begin{align}\label{sq-average}
\begin{split}
		 \int_{[0,2\pi]}	 |K^{f_\xi}(te^{i \alpha},se^{i \beta})|^2  \frac{d\alpha}{2\pi}  =&  \int_{[0, 2 \pi]} \Big| \sum_{n=0}^\infty \xi_n (n+1) t^n s^n e^{i n (\alpha - \beta)} \Big|^2 \frac{d\alpha}{2\pi }\\
		=& \sum_{n=0}^\infty \xi_n (n+1)^2 t^{2n} s^{2n}.
\end{split}
\end{align}
For any compact subset $B$ in the unit disc, there exists $0<r_0<1$ such that 
\[
B\subset  \{z\in \mathbb{C}: |z|\le r_0\}.
\]
For such real number $r_0\in (0, 1)$ and any $\epsilon>0$, let $h_{r_0, \epsilon}$ be  the function appeared  in Lemma \ref{lem-key} and set 
\begin{align}\label{def-phi-B}
\phi_{B, \epsilon}(z)=h_{r_0, \epsilon}(|z|).
\end{align}
By \eqref{sq-average}, the definition \eqref{def-xi-n} of the random variables $\xi_n$ and the following elementary identity 
\[
\sum_{n = 0}^\infty (n+1) x^n = \frac{1}{(1 - x)^2},
\]
 we obtain 
	\begin{eqnarray*}
		&&{\mathbb{E}}\left[ \int_{z\in\mathbb{D}}\int_{w\in\mathbb{D}} |\phi_{B, \epsilon} (z)- \phi_{B, \epsilon} (w)|^2  |K^{f_\xi}(z,w)|^2  dm(z) dm(w) \right] \\
			&=&{\mathbb{E}}\left[\int_{t\in[0,1)}\int_{s\in[0,1)}  ts| h_{r_0, \epsilon} (t)- h_{r_0, \epsilon}(s)|^2 dt ds \int_{\alpha\in[0,2\pi]} \int_{\beta\in[0,2\pi]} |K^{f_\xi}(te^{i \alpha},se^{i \beta})|^2  \frac{d\alpha}{\pi} \frac{d\beta}{\pi}\right] \\
		&=& 4 \int_{[0,1)}\int_{[0,1)} ts |h_{r_0, \epsilon}(t)-h_{r_0, \epsilon}(s)|^2 \sum_{n=0}^\infty (n+1)^2 (ts)^{2n}  {\mathbb{E}} \xi_n  dtds \\
			& =  & 4 \int_{[0,1)}\int_{[0,1)}  |h_{r_0, \epsilon}(t)-h_{r_0, \epsilon}(s)|^2 \sum_{n=0}^\infty (n+1)  (ts)^{2n}  ts   dtds \\
		&=& 4 \int_{[0,1)}\int_{[0,1)}    |h_{r_0, \epsilon}(t)-h_{r_0, \epsilon}(s)|^2 \frac{1}{(1-s^2t^2 )^2} ts dt ds \\
&\leq & 4 \int_{[0,1)}\int_{[0,1)}    |h_{r_0, \epsilon}(t)- h_{r_0, \epsilon}(s)|^2 \frac{1}{(1-st )^2}  dt ds 
\\
		& \leq &  4\epsilon.
	\end{eqnarray*}
Now for any integer $n\ge 1$, set 
\begin{align}\label{def-phi-n}
\phi_n(z): = \phi_{B, n^{-2}}(z). 
\end{align}
By the above computation, for each integer $n\ge 1$, we have 
	$${\mathbb{E}}\left[ \int_{z\in\mathbb{D}}\int_{w\in\mathbb{D}} |\phi_n(z)-\phi_n(w)|^2  |K^{f_\xi}(z,w)|^2  dm(z) dm(w) \right] \leq \frac{4}{n^2} $$
and hence 
		$$ \sum_{n=1}^\infty   {\mathbb{E}}\left[\int_{z\in\mathbb{D}}\int_{w\in\mathbb{D}} |\phi_n(z)-\phi_n(w)|^2  |K^{f_\xi}(z,w)|^2  dm(z) dm(w) \right] <\infty. $$
  Levi lemma implies that 
  $$\sum_{n=1}^\infty \int_{z\in\mathbb{D}}\int_{w\in\mathbb{D}} |\phi_n(z)-\phi_n(w)|^2  |K^{f_\xi}(z,w)|^2  dm(z) dm(w) <\infty, \quad a.s.  $$
		It follows that 
		\begin{align}\label{lim-phi-n}
\lim_{n\to\infty}\int_{z\in\mathbb{D}}\int_{w\in\mathbb{D}} |\phi_n(z)-\phi_n(w)|^2  |K^{f_\xi}(z,w)|^2  dm(z) dm(w)  = 0, \quad a.s.   \end{align}
	Note that by \eqref{def-phi-B}, \eqref{def-phi-n} and the property of $h_{r_0, \epsilon}$, we know that for each $n \ge 1$, the function $\phi_n:\mathbb{D}\rightarrow \mathbb{R}$ is a bounded measurable   function of compact support such that $\phi_n \equiv 1$ on $B$. Therefore, by Proposition \ref{prop-GP} and the equality \eqref{def-var-f},  the limit relation  \eqref{lim-phi-n} implies that, for almost every realization of $\xi$,  the determinantal point process induced by the orthogonal projection kernel $K^{f_\xi}(z,w)$ is number rigid. 
\end{proof}

\begin{proof}[Proof of Proposition \ref{prop-nolacunary}]  Let $I_n=(2^{n },2^{n+1} ] \cap \mathbb{N}$ and 
	$\mathcal{N}_n=\sum_{k\in I_n} \xi_k$. We claim that for any integer $C\ge 1$,
	$$\limsup_n \mathcal{N}_n \geq C, \quad a.s. $$

	Note that for  a lacunary  set $\Lambda=\{\lambda_1,\lambda_2,\cdots\}$ with the gap ratio $\rho =\liminf_k \frac{\lambda_{k+1}}{\lambda_{k}}>1$, the subset  $\Lambda\cap  I_n$ has most $[\frac{\log 2}{\log(\rho+1)/2}]+1$ elements when $n$ is  sufficiently large.  More generally,  for an integer set $\Lambda=\cup_{i=1}^p \Lambda_i$ with each lacunary  set $\Lambda_i$ having the gap ratio $\rho_i$,  one has that the set $\Lambda\cap  I_n$ contains at most $\sum_{i=1}^p [\frac{\log 2}{\log(\rho_i+1)/2}]+p$ elements when $n$ is  sufficiently large.   Combining this with the claim,  it follows that almost surely, the subset 
	\[
	\Lambda_\xi = \{k\in \mathbb{N}| \xi_k  =1\}
	\]
	is not a finite union of lacunary sets.
	
	 We next prove the claim. Noting that for $k\in I_n$,
	 $$\mathrm{Prob}[\xi_k=1]=\frac{1}{k+1}\geq \frac{1}{2^{n+1 }+1}  $$
	 and 
	 $$\mathrm{Prob}[\xi_k=0]=1-\frac{1}{k+1}\geq 1-\frac{1}{2^{n } } , $$
	 we have that   for a fixed integer $C\ge 1$,
	\begin{eqnarray*}
	\mathrm{Prob}[\mathcal{N}_n=C]&\geq &\sum_{A\subseteq I_n, |A|=C} \left(\frac{1}{1+2^{n+1 }}\right)^C \left(1-\frac{1}{2^{n } }\right)^{2^{n }-C}\\
	&=& \binom{2^n}{C}\frac{1}{(1+2^{n+1})^C } \left(1-\frac{1}{2^{n } }\right)^{2^{n }-C}.
	\end{eqnarray*}  
Note that
$$\lim_{n\to\infty}\binom{2^n}{C}\frac{1}{(1+2^{n+1})^C }=\lim_{n\to\infty} \frac{2^n(2^n-1)\cdots(2^n-C+1)}{C!(1+2^{n+1})^C}=\frac{1}{ 2^C\,C!} $$
and
$$\lim_{n\to\infty} \left(1-\frac{1}{2^{n } }\right)^{2^{n }-C}=\lim_{n\to\infty} \Huge{\{}[1-\frac{1}{2^{n } }]^{2^n} \Huge{\}}^{ \frac{2^{n }-C}{2^n}}=\frac{1}{e}. $$
Therefore, there exist an integer  $M$ and $\beta>0$ such that for $n>M$,
$$\binom{2^n}{C}\frac{1}{(1+2^{n+1})^C } \left(1-\frac{1}{2^{n } }\right)^{2^{n }-C}>\beta. $$
This implies that, for any integer $n >M$,
	\begin{eqnarray*}
	\mathrm{Prob} [\mathcal{N}_n\geq C]&\geq &   \mathrm{Prob}[\mathcal{N}_n= C]>\beta 
\end{eqnarray*} 
and hence
	\begin{eqnarray*}
	\sum_{n=0}^\infty \mathrm{Prob}[\mathcal{N}_n\geq C]=\infty. 
\end{eqnarray*} 
Noting that the random variables $\mathcal{N}_n$ are   independent,  by Borel-Cantelli lemma,   we have
$$ \limsup_{n\to\infty}\mathcal{N}_n\geq C  \quad a.s. $$
This   completes the proof. 
\end{proof}
	
\section{Proof of Lemma \ref{lem-key}}
In this section  we will find a suitable function $h$ such that the integral in  Lemma \ref{lem-key} is small enough.  Comparing with the trace formula 
$$ \mathrm{tr}(H^*_{\bar g}H_{\bar g})=\int_{\mathbb{D}} \int_{\mathbb{D}} \frac{|g(z)-g(w)|^2}{|1-z\bar{w}|^4} dm(z)dm(w)$$
for $g \in \mathcal{B}$
in  the classical Hankel operator theory in Bergman space \cite{Zhu}, one may guess that a function $h$ on $[0, 1]$ would satisfies the requirement in Lemma \ref{lem-key}   if $z\mapsto g(z) =  h(|z|)$ has a small VMO norm, or the growth of such $g$ is slow than the Poincar\'e metric.

Recall that the   Poincar\'e metric  on $\mathbb{D}$ is defined by  $$\rho(z_1,z_2)=\log \frac{1+\varphi_{z_1}(z_2)}{1-\varphi_{z_1}(z_2)}=\log \frac{|z_1-z_2|+|1-\bar{z}_1 z_2 |}{|z_1-z_2|-|1-\bar{z}_1 z_2 |},$$
where $\varphi_{z_1}(z_2)=\frac{z_2-z_1}{1-\bar{z}_1 z_2}$ is the Mobius transformation on the unit disc $\mathbb{D}$. For $0<r_0<r <1$, we write

\begin{equation}
h(t)= h^{(r_0, r)} (t)= \left\{
\begin{array}{ccl}
1 & & {t \leq r_0}\\
\frac{\rho(t,r )}{\rho(r_0,r )} & & {r_0 \leq t \leq r }\\
0 & & {t \geq r }
\end{array} \right..
\end{equation}
The following theorem implies our  technique lemma in the above section.
 \begin{theorem}
	For fixed $0<r_0<1$, we have that  the integral 
	$$\int_{[0,1)}\int_{[0,1)} |h(t)-h(s)|^2 \frac{1}{(1-st)^2 }  ds dt$$
	tends to $0$ 	when $\varphi_{r_0}(r) \to 1$.
	\end{theorem}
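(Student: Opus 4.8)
The parameter to track is $L := \rho(r_0,r) = \log\frac{1+\varphi_{r_0}(r)}{1-\varphi_{r_0}(r)}$; since $r_0$ is fixed, $\varphi_{r_0}(r)\to 1$ is the same as $r\to 1^-$, and then $L\to\infty$, so it suffices to prove that the integral is $O(1/L)$. The plan is to split $[0,1)^2$ according to the three zones $A=[0,r_0]$, $B=[r_0,r]$, $C=[r,1)$, on which $h=h^{(r_0,r)}$ equals $1$, equals $\rho(\cdot,r)/L$, and equals $0$ respectively. A clean way to handle the middle zone is to write $\rho(a,b)=|\psi(a)-\psi(b)|$ with $\psi(x):=\log\frac{1-x}{1+x}$ (strictly decreasing, $\psi(0)=0$); then on $B$ one has $h(t)=(\psi(t)-\psi(r))/L$, so that $|h(t)-h(s)|=\rho(t,s)/L$ for $t,s\in B$ and $1-h(t)=\rho(r_0,t)/L$ for $t\in B$. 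Since $|h(t)-h(s)|^2$ vanishes on $A\times A$ and on $C\times C$, by symmetry the integral equals $2I_{AC}+2I_{BA}+2I_{BC}+I_{BB}$, where $I_{AC}=\iint_{A\times C}(1-st)^{-2}\,ds\,dt$, $I_{BA}=\iint_{B\times A}(1-h(t))^2(1-st)^{-2}\,ds\,dt$, $I_{BC}=\iint_{B\times C}h(t)^2(1-st)^{-2}\,ds\,dt$, $I_{BB}=\iint_{B\times B}|h(t)-h(s)|^2(1-st)^{-2}\,ds\,dt$; I will show each tends to $0$.

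Before the estimates I would record three routine ingredients: (i) for $0\le a\le b<1$, $\rho(a,b)=\log\frac{(1-a)(1+b)}{(1+a)(1-b)}\le \log\frac{1-a}{1-b}+\log 2$; (ii) the one–variable integrals $\int_r^1(1-st)^{-2}\,ds=\frac{1-r}{(1-t)(1-rt)}$ and $\int_0^{r_0}(1-st)^{-2}\,ds=\frac{r_0}{1-r_0t}\le\frac{r_0}{1-r_0}$; (iii) the substitution $1-t=(1-r)e^{x}$, which sends $t\in[r_0,r]$ to $x\in[0,X]$ with $X:=\log\frac{1-r_0}{1-r}\le L$, and under which $1-rt=(1-r)(1+re^{x})$, $\log\frac{1-t}{1-r}=x$, and $\frac{(1-r)\,dt}{(1-t)(1-rt)}=\frac{dx}{1+re^{x}}$.

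The three off–diagonal pieces are then immediate. On $A\times C$ the integrand is at most $\frac{1-r}{(1-r_0)^2}$, so $I_{AC}=O(1-r)$. For $I_{BA}$, bounding the $s$–integral by $\frac{r_0}{1-r_0}$ and using $(1-h(t))^2=\rho(r_0,t)^2/L^2$ gives $I_{BA}\le\frac{r_0}{(1-r_0)L^2}\int_{r_0}^1\rho(r_0,t)^2\,dt=O(1/L^2)$, since the last integral is a finite constant depending only on $r_0$ (because $\rho(r_0,t)$ is bounded by $C(r_0)+\log\frac{1}{1-t}$, which is square–integrable on $[r_0,1)$). For $I_{BC}$, use $h(t)^2=\rho(t,r)^2/L^2\le\frac{2}{L^2}\big((\log\tfrac{1-t}{1-r})^2+(\log 2)^2\big)$ together with $\int_r^1(1-st)^{-2}\,ds=\frac{1-r}{(1-t)(1-rt)}$; the substitution (iii) then collapses $I_{BC}$ to $\frac{2}{L^2}\int_0^X\frac{x^2+(\log 2)^2}{1+re^{x}}\,dx\le\frac{C}{L^2}$ (for, say, $r\ge\tfrac12$, using $1+re^{x}\ge\tfrac12 e^{x}$), so $I_{BC}=O(1/L^2)$.

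The heart of the matter — and the one step I expect to require genuine care — is the diagonal block $I_{BB}=L^{-2}\iint_{B\times B}\rho(t,s)^2(1-st)^{-2}\,ds\,dt$, which must be shown to be $O(1/L)$, i.e. $\iint_{B\times B}\rho(t,s)^2(1-st)^{-2}\,ds\,dt=O(L)$. Here I would apply (iii) in both variables: $1-t=(1-r)e^{x}$, $1-s=(1-r)e^{y}$, $x,y\in[0,X]$. Then $1-st=(1-r)\big(e^{x}+e^{y}-(1-r)e^{x+y}\big)$, and the key observation is that $(1-r)e^{x+y}=(1-t)e^{y}\le(1-r_0)e^{y}\le(1-r_0)(e^{x}+e^{y})$, whence $1-st\ge r_0(1-r)(e^{x}+e^{y})$; together with $ds\,dt=(1-r)^2e^{x+y}\,dx\,dy$ this bounds $\frac{ds\,dt}{(1-st)^2}$ by $\frac{e^{x+y}}{r_0^2(e^{x}+e^{y})^2}\,dx\,dy=\frac{dx\,dy}{4r_0^2\cosh^2((x-y)/2)}$ — a kernel depending on $x-y$ only, with rapid decay. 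Since also $\rho(t,s)\le|x-y|+\log 2$, changing to coordinates $u=x-y$, $v=x+y$ (for each fixed $u$, $v$ sweeps an interval of length $\le 2X$) yields $\iint_{B\times B}\rho(t,s)^2(1-st)^{-2}\,ds\,dt\le\frac{X}{4r_0^2}\int_{\mathbb R}\frac{2u^2+2(\log 2)^2}{\cosh^2(u/2)}\,du=O(X)=O(L)$, so $I_{BB}=O(1/L)$. Combining the four bounds, the integral is $O(1-r)+O(1/L^2)+O(1/L)\to 0$ as $\varphi_{r_0}(r)\to 1$, which is the assertion. The only inputs needing (elementary) verification are the uniform lower bound $1-st\ge r_0(1-r)(e^{x}+e^{y})$ on $B\times B$, the convergence of the fixed integral $\int_{r_0}^1\rho(r_0,t)^2\,dt$, and the convergence of $\int_{\mathbb R}u^2\cosh^{-2}(u/2)\,du$.
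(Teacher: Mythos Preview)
Your argument is correct, but it proceeds by a genuinely different route than the paper's. The paper exploits the M\"obius invariance of the Poincar\'e metric: for the block $B\times B$ it substitutes $t\mapsto\varphi_{r_0}(t)$, $s\mapsto\varphi_{r_0}(s)$ (sending $r_0\to 0$) and then $s\mapsto\varphi_t(s)$, under which the kernel $(1-st)^{-2}\,ds\,dt$ transforms cleanly, and the inner integral reduces to the fixed constant $\int_0^1\rho(s,0)^2\,ds$; the $B\times C$ block is handled by the analogous substitution centered at $r$. You instead linearize via the logarithmic change $1-t=(1-r)e^{x}$, which turns the kernel on $B\times B$ into (up to an $r_0$-dependent constant) the translation-invariant $\cosh^{-2}((x-y)/2)$ with exponential decay, and $\rho(t,s)$ into essentially $|x-y|$; the $O(L)$ bound then falls out of a single convolution estimate. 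Both methods yield the same $O(1/L)$ rate for the dominant block; the paper's is more conceptual (the conformal structure does the work), yours is more elementary and entirely real-variable. Two small remarks: your sentence ``the integrand is at most $\frac{1-r}{(1-r_0)^2}$'' on $A\times C$ should read that the integrand is at most $(1-r_0)^{-2}$ over a region of area $r_0(1-r)$, but the conclusion $I_{AC}=O(1-r)$ is of course right; and you explicitly treat the $A\times B$ region, which the paper's three-case decomposition actually leaves out.
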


\begin{proof}When $0 < t,s<r_0$ or $r < t,s<1 $, then the expression of the integral  is
	equal to zero. So we shall estimate our integral over the following three domains
	$$ 0<t <r_0<r< s ;\,\,r_0< t < s <r;\,\,r_0< t <r< s . $$
	
	The first case:  the integral $(I)$ over the domain  $ 0<t <r_0<r< s$ can be calculated  explicitly. 
	\begin{eqnarray*}
		(I)  = \int_0^{r_0} dt    \int_{r}^1 ds  \frac{1}{(1-st)^2 } &=& \int_r^{1} ds  \left[   \frac{1 }{s(1-st) }\Big|_{0}^{r_0} \right]
\\
		&=& \int_r^{1}  \frac{r_0 }{ 1-r_0 s}  ds\\
		&=&\log \frac{1-r_0 r}{1-r_0}=\log \frac{1+r_0}{1+r_0  \varphi_{r_0}(r)}.
	\end{eqnarray*}
	Therefore, for fixed $r_0$, the integral $(I)$ tends to $0$ when $\varphi_{r_0}(r) \to 1$.		
	
	The second case:	We estimate   the integral $(II)$ over the domain $r_0 < t < s< r$:
	$$(II)=\int_{r_0}^{r } dt   \int_{t}^r ds  \frac{\rho^2(s,t)}{\rho^2(r_0,r)}\frac{1}{(1-st)^2 }.$$
	By the substitutions  $t\to \varphi_{r_0}(t), s\to \varphi_{r_0}(s)$, we obtain 
	\begin{eqnarray*}
		& &	\int_{r_0}^{r } dt    \int_{t}^r ds  \frac{\rho^2(s,t)}{\rho^2(r_0,r)}\frac{1}{(1-st)^2 }\\ &=& \int_{0}^{\varphi_{r_0}(r) } \frac{1-r_0^2}{(1+r_0 t)^2}dt    \int_{t}^{\varphi_{r_0}(r)} \frac{1-r_0^2}{(1+r_0 s)^2} ds  \frac{\rho^2(s,t)}{\rho^2(0,\varphi_{r_0}(r))}\frac{1}{(1-\frac{s+r_0}{1+r_0 s}\frac{t+r_0}{1+r_0 t})^2 }\\
		&=&   \int_{0}^{\varphi_{r_0}(r) } dt   \int_{t}^{\varphi_{r_0}(r)}   ds  \frac{\rho^2(s,t)}{\rho^2(0,\varphi_{r_0}(r))}\frac{1}{(1-st )^2 }.
	\end{eqnarray*}
	
	Now make the substitution    $s\to \varphi_t(s) $, we get
	\begin{eqnarray*}
		(II )&=&   \int_{0}^{\varphi_{r_0}(r) } dt   \int_{t}^{\varphi_{r_0}(r)}   ds  \frac{\rho^2(s,r)}{\rho^2(0,\varphi_{r_0}(r))}\frac{1}{(1-st )^2 }\\
		&=&   \int_{0}^{\varphi_{r_0}(r) } dt \int_{0}^{\varphi_t(\varphi_{r_0}(r)) } ds \frac{\rho^2(s,0)}{\rho^2(0,\varphi_{r_0}(r))}\frac{1}{ 1- t^2   }\\
		&\leq & \frac{1}{\rho^2(0,\varphi_{r_0}(r))} \int_{0}^{\varphi_{r_0}(r) } \frac{1}{ 1- t^2   } dt \int_{0}^{1 } \rho^2(s,0) ds\\
		&=&  \frac{C_2 }{2 \rho (0,\varphi_{r_0}(r))},
	\end{eqnarray*}
	where $C_2:=\int_{0}^{1 } \rho^2(s,0) ds<\infty$. Therefore,
	integral $(II)$ tends to $0$ when $\varphi_{r_0}(r)\to 1.$
	
	The third case:	We  now estimate   the integral $(III)$
	$$(III)=\int_{r_0}^{r } dt    \int_{r}^1 ds  \frac{\rho^2(r,t)}{\rho^2(r_0,r)}\frac{1}{(1-st)^2 }.$$
	With the substitutions    $t\to \varphi_{r}(t),s\to \varphi_{r}(s) $, the   change of variables formula yields that
	\begin{eqnarray*}
		(III )&=& \int_{r_0}^{r } dt    \int_{r}^1 ds  \frac{\rho^2(r,t)}{\rho^2(r_0,r)}\frac{1}{(1-st)^2 }\\
		&=&   \int_{-\varphi_{r_0}(r)}^{0 } dt \int_{0}^{1 } ds \frac{\rho^2(0,t)}{\rho^2(0,\varphi_{r_0}(r))}\frac{1}{ (1-st)^2  }\\
		&\leq & \frac{1}{\rho^2(0,\varphi_{r_0}(r))} \int_{-1}^{0 }  \rho^2(t,0) dt \int_{0}^{1 } ds\\
		&=&  \frac{C_2}{  \rho^2(0,\varphi_{r_0}(r))}.
	\end{eqnarray*}
	Therefore,
	integral $(III)$ tends to $0$ when $\varphi_{r_0}(r)\to 1,$ which completes the proof. 
\end{proof}

\end{document}